\documentclass{amsart}
\usepackage{graphicx} 
\usepackage{hyperref}
\usepackage{amsmath,amssymb,mathtools}

\theoremstyle{plain}
\newtheorem{theorem}{Theorem}[section]
\newtheorem{proposition}[theorem]{Proposition}

\theoremstyle{definition}

\theoremstyle{remark}
\newtheorem{remark}[theorem]{Remark}

\title[From Chaplygin sleigh to Torpedo]{From Chaplygin Sleigh to Torpedo: An~example of a~Cartan-Geometric View of Left-Invariant Nonholonomic Systems}

\author[Pavla Musilov\'a]{Pavla Musilov\'a}
\address[Pavla Musilov\'a]{Masaryk University, Faculty of Science, Department of Theoretical Physics and Astrophysics}\email{pavla@physics.muni.cz}

\author[Jan Slov\'ak]{Jan Slov\'ak}
\address[Jan Slov\'ak]{Masaryk University, Faculty of Science, Department of Mathematics and Statistics}\email{slovak@math.muni.cz}
\date{August 2026}
\thanks{The authors acknowledge the support of the Czech
Grant Agency project GA24-10887S, as well as the Horizon Europe MSCA projects CaLIGOLA and CaLiForNIA, ID 101086123 and ID 101119552.}

\subjclass{17B10, 17B25, 22E47, 58J60}
\keywords{non-holonomic mechanical systems, sub-Riemannian geometry, geometric control theory, filtered manifolds, Cartan geometry}

\begin{document}

\begin{abstract}
We discuss two models of a 3D torpedo in which the centre of mass $x$ moves only longitudinally, while the body may rotate about $x$. 
Although these are simple examples of left-invariant distributions on the Lie group $SE(3)$, they possess rich geometric structures. For the simplest model, we arrive at an abelian extension of a flat (3,5) path geometry, while adding tiny coupling, we obtain a non-trivial  hyperbolic type codimension two CR geometry whose symmetry algebra is just $\mathfrak{se}(3)$.
\end{abstract}

\maketitle

The so called Chaplygin sleigh is a typical simple example of left invariant non-holonomic mechanical systems, whose configuration spaces are Lie groups. There are powerful tools to handle such situations, both when looking for the mechanical behaviour or sub-Riemannian control theory optimisation, cf. \cite{Bloch, DGHM, AS}. 

The sleigh can be realized as a rod moving in the plane (a frozen lake) with a knife in its center of mass, allowing it to move only in the longitudinal direction. The configuration space clearly is $Q=\mathbb R^2\ltimes S^1=SE(2)$, i.e., the Lie group of Euclidean motions in the plane. By the constraint, there is only a 2-dimensional freedom of admissible velocities in the 3-dimensional tangent space to $Q$. Starting with a given momentum and position, the passive mechanical behavior follows the d'Alembert-Suslov principle. Mechanical kinetic energy is conserved, but the phase space volume compresses and the system passively drifts toward a stable straight-line trajectory, determined by the Euler–Poincaré–Suslov equations. 

This work deals with the straightforward 3-dimensional generalization, considering a solid rod, i.e., a cylinder $M$ in $\mathbb R^3$, with centre of mass $x$ allowed to move only in the longitudinal direction of its axis. Let us call such a mechanical system a \emph{torpedo}.

The configuration of such a system comes with a natural filtration and many low dimensional filtered manifolds come automatically equipped with a rigid natural Cartan geometry. This is also our case, and we shall focus on this phenomenon. In particular, we shall study the associated $(4,6)$ filtration on the configuration space, employing the full understanding of the relevant Cartan geometry picture, \cite{DeZanet, SchSl}. 

\section{Configuration Space and Kinematics}

According to the restrictions given, the velocity of the centre of the mass $x$ of our torpedo is collinear with its body axis and the cylinder may rotate about any axis through its centre $x$. Thus, the configuration space $Q$ of our torpedo $M$ is the Lie group
$$
Q = SE(3) = SO(3) \ltimes \mathbb{R}^3.
$$

The torpedo configuration is given by the choice of rotation $R\in SO(3)$ and position of $x\in\mathbb R^3$.  

Let $\omega \in \mathbb{R}^3\simeq \mathfrak{so}(3)$ denote the angular velocity and $v \in \mathbb{R}$ longitudinal velocity (speed) of the body frame. 

Let us write $A_1$, $A_2$, and $A_3$ for the left invariant vector fields generated by body rotation about the standard basis vectors $e_1$, $e_2$, and $e_3$, respectively, while $T_1$, $T_2$, and $T_3$ are the translations along these axes. These choices lead to the following structure constants:
\begin{equation}\label{SEbrackets}
[A_i,A_j] = \epsilon_{ijk}A_k, \quad [A_i,T_j] = \epsilon_{ijk}T_k,\quad [T_i,T_j] = 0
.\end{equation}

Our torpedo constraint says (viewing the longitudinal direction as the $e_3$ vector in the body fixed frame)
\begin{equation}
R^{-1}\dot x = v e_3    
\end{equation}
while any body angular velocity is admitted
\begin{equation}
R^{-1}\dot R = \omega
\end{equation}
with $\omega = \omega_1 e_1 + \omega_2 e_2 + \omega_3 e_3$. This yields
\begin{equation}\label{kinematic_eq}
    \dot R = R\omega, \quad \dot x = vRe_3
.\end{equation}
Rewriting this kinematics into the fixed body frame $(\omega_1,\omega_2,\omega_3,e_1,e_2,e_3)$, we arrive at the velocity $u\in\mathbb R^3$ constraint $u_1=u_2=0$, and $(\omega_1,\omega_2,\omega_3, u_3)$ remain free. 

Now, it is clear that this constraint does not depend on the configuration, and our mechanical system is left invariant. In the group unit $(E,0) \in SE(3)$, the distribution of admissible velocities is
\begin{equation}
    D= \operatorname{span}\{A_1,A_2,A_3,T_3\}
.\end{equation}
The two brackets $[A_1,T_3] = -T_2$ and $[A_2,T_3]=T_1$ provide the remaining two directions. Thus, $D+[D,D]=TSE(3)$ in all possible configurations.

We have arrived at the following

\begin{proposition}
The above defined 3D torpedo enjoys a non-holonomic left-invariant model on the configuration space $SE(3)$, with growth vector $(4,6)$, and driven by equations \eqref{kinematic_eq}.
\end{proposition}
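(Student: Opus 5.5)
The plan is to verify three things in turn: that the constraint defines a left-invariant rank-4 distribution $D\subset TSE(3)$; that it is bracket generating with growth vector $(4,6)$; and that its horizontal curves are exactly the solutions of \eqref{kinematic_eq}.

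\textbf{Left invariance.} Write a curve in $Q=SE(3)$ as $g(t)=(R(t),x(t))$ and compute the left-trivialised velocity $g^{-1}\dot g=(R^{-1}\dot R,\,R^{-1}\dot x)\in\mathfrak{se}(3)$. In the basis $A_i,T_j$ this reads $\omega_iA_i+u_jT_j$. The constraint $R^{-1}\dot x=ve_3$ is precisely $u_1=u_2=0$. It is expressed entirely through the left-trivialised velocity, so it is unchanged under left translation $g\mapsto hg$. Hence the admissible velocities at $g$ form the subspace $D_g=T_eL_g(D)$ with $D=\operatorname{span}\{A_1,A_2,A_3,T_3\}$. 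This is a left-invariant distribution of constant rank $4$, and its horizontal curves are exactly the solutions of \eqref{kinematic_eq} with arbitrary controls $\omega(t)$, $v(t)$.

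\textbf{Growth vector.} Since $D$ is spanned by left-invariant fields, the filtration $D\subset D+[D,D]\subset\cdots$ is determined by Lie brackets in $\mathfrak{se}(3)$, and it suffices to check it at the unit. Using \eqref{SEbrackets}:
\begin{itemize}
\item $[A_i,A_j]$ stays in $\operatorname{span}\{A_k\}\subset D$;
\item $[A_3,T_3]=0$;
\item $[A_1,T_3]=-T_2$ and $[A_2,T_3]=T_1$.
\end{itemize}
Thus $D+[D,D]=\operatorname{span}\{A_1,A_2,A_3,T_1,T_2,T_3\}=\mathfrak{se}(3)$, which has dimension $6$. Moreover $[D,D]\not\subset D$ because $T_1\notin D$, so $D$ is not involutive. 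The growth vector is therefore $(4,6)$ at every point.

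\textbf{Non-holonomy.} By Frobenius, $D$ is not integrable. By Chow--Rashevskii, any two configurations can be joined by an admissible curve. This is the non-holonomic property, and together with the previous two steps it proves the proposition.

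\textbf{Main point of care.} The only real issue is the left-invariance step. One must confirm that the body-frame velocity $R^{-1}\dot x$ is the translational part of $g^{-1}\dot g$ under the chosen semidirect product conventions, so that the constraint is left- rather than right-invariant. A direct computation with the matrix representation $g=\begin{pmatrix}R&x\\0&1\end{pmatrix}$ settles this: there $g^{-1}\dot g=\begin{pmatrix}R^{-1}\dot R&R^{-1}\dot x\\0&0\end{pmatrix}$.
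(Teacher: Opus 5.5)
Your proof is correct and follows essentially the same route as the paper: left invariance holds because the constraint is a condition on the left-trivialised velocity $g^{-1}\dot g$, and the growth vector $(4,6)$ follows from the brackets $[A_1,T_3]=-T_2$, $[A_2,T_3]=T_1$ computed at the unit. Your explicit matrix computation of $g^{-1}\dot g$ and your Frobenius/Chow--Rashevskii remark on non-holonomy (using that $SE(3)$ is connected) make explicit what the paper leaves implicit.
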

\begin{proof}
As discussed above, the model is left-invariant. Thus, the filtration has the growth vector $(4,6)$, as we verified in the unit of $SE(3)$.
\end{proof}

\begin{remark} Our torpedo is a member of a family of geometries, starting with the Chaplygin sleigh as its planar case. Indeed, for a solid rod moving in $\mathbb R^n$ write $T_1,\dots, T_n$ for translations in directions of the standard basis vectors $(e_1,\dots, e_n)$, and $A_{ij}$ for the $\binom n2$ elementary rotations in the $e_i\wedge e_j$ planes generating $SO(n)$. Further, choose the left invariant distribution given at the unit $(E,0)$ as 
$$
D_n= \mathfrak{so}(n)\oplus \mathbb R T_n.
$$
Its rank is $\frac{n(n-1)}{2}+1$, while the dimension of $SE(n)$ is $\frac{n(n-1)}{2} + n$. 

Clearly, the brackets $[A_{in},T_n]=\pm T_i$, $1\le i< n$, provide the remaining $n-1$ dimensions and so the growth vector is at all points $(\binom{n}{2} +1, \binom{n+1}{2})$. 
Again, we obtain a left invariant one-step bracket generating distribution $D$ on the configuration space $SE(n)$. 
\end{remark}

Now, we come to the interesting special feature of the $(4,6)$ torpedo symbol. 
Set
\begin{equation}
    \mathfrak{g}_{-1} = D, \quad
    \mathfrak{g}_{-2} = \mathfrak{se}(3)/D = \operatorname{span}\{T_1, T_2\}
,\end{equation} 
so that the symbol of the filtered manifold $SE(3)$ is the left-invariant graded vector bundle generated by $\mathfrak{g}_{-2}\oplus\mathfrak{g}_{-1}$. The symbol Levi bracket is 
$$
\mathcal L:\Lambda^2\mathfrak g_{-1} \to \mathfrak g_{-2}
,$$
given by the brackets 
\begin{equation}\label{symbolbrackets}
\mathcal L(A_1,T_3)=-T_2,\quad \mathcal L(A_2,T_3)=T_1,\quad \mathcal L(A_i,A_j)=0,\quad \mathcal L(A_3,T_3)=0.    
\end{equation}

In particular, there is the characteristic line 
\begin{equation}\label{charline}
\operatorname{Char}(D)=\mathbb R A_3    
\end{equation} 
of the vector-valued 2-form $\mathcal L$. 
Clearly, for our torpedo, this is axial rolling around the longitudinal axis, which is not efficiently coupled in the system. 

In the general $SE(n)$ version, this characteristic subalgebra equals $\mathfrak{so}(n-1)\subset D_n$ and we do not see this phenomenon in the Chaplyging sleight planar case.

\begin{remark}
Actually, the more realistic mechanical picture would be to govern the rotations by an angular acceleration, i.e., an $\mathfrak{so}(3)$-valued torque (achieved, e.g., by some internal gyroscope inside our rod as usual in satellite steering). This means that we would add the rotation velocity to the configuration space parameters, $Q=SE(3)\times \mathfrak{so}(3)$, and consider the four independent steering control vectors
$$
X=T_3+\omega_1 A_1 +\omega_2 A_2 + \omega_3 A_3, \qquad U_i = \frac\partial{\partial\omega_i}.
$$
A direct check reveals that for $\tilde D=\operatorname{span}\{X,U_1,U_2,U_3\}$ we obtain
$$
\tilde D^2 = \tilde D + \operatorname{span}\{A_1,A_2,A_3\}
,$$
while $[A_1,T_3]= -T_2$ and $[A_2,T_3]=T_1$ yield the remaining two directions. Thus, the filtration has the growth vector $(4,7,9)$. 

Notice that with this fixed forward speed $X$ and three torgue controls $\omega_i$, the control vector field $X$ is a drift. Thus, our control system belongs to the so-called affine control systems, a more complicated object, cf. \cite{AS, Bloch}.
\end{remark}

\section{The Cartan geometry picture}
The Levi form on a distribution $D\subset TQ$ is the basic object for the Cartan geometry approach, and the relevant type of geometry is revealed from the orbit of the symbol under the natural automorphism group of the filtrations. 

In our 3D torpedo case, we must hit one of the orbit types classified in De Zanet's work, \cite{DeZanet}. This paper deals with one-step generated filtered manifolds defined by a distribution $D$ of dimension 4. 

Notice that the wedge product defines a conformal quadratic form $b$ of signature $(3,3)$ on $\Lambda^2 D$ and De Zanet showed that the orbits are fully determined by the restriction of the quadratic form to the kernel of the Levi bracket $\mathcal L$, which is of dimension 4, namely by its rank and signature. Equivalently, we may use the 2-plane $W = \operatorname{Im}\mathcal L^* \subset \Lambda^2 D^* \simeq \Lambda^2 D$, i.e., $W$ is the annihilator of the kernel, and we may consider the restriction of the quadratic form to $W$. The De Zanet's classification of the orbits is then expressed by the signatures $(2,0),~(1,1),~(1,0),~(0,0)$ of $b_{|W}$.  

In our case, $D=\operatorname{span}\{A_1,A_2,A_3,T_3\}$, $TQ/D\simeq\operatorname{span}\{T_1,T_2\}$, and the symbol is given by the brackets \eqref{symbolbrackets}. Thus, writing $e^1,e^2,e^3,e^4$ for the dual basis to $A_1,A_2,A_3,T_3$,
$$
W = \operatorname{span}\{e^1\wedge e^4, e^2\wedge e^4\}
,$$
and clearly $b_{|W}=0$, i.e., our quadratic form vanishes on $W$. The symbol of our torpedo belongs to the orbit of totally isotropic two-planes in the space $\Lambda^2\mathbb R^4\simeq \mathbb R^{3,3}$.   
Consequently, our torpedo belongs to the most degenerate of possible $(4,6)$ geometries, while the two $(4,6)$ geometries studied in detail in \cite{SchSl} belong to the two open orbits corresponding to the two regular quadric types. Actually, it is also clear that our torpedo cannot be one of those codimension 2 CR-geometries of \cite{SchSl}, since this would require the existence of compatible almost complex structures on $D$. Since the characteristic line of $\mathcal L$ would have to be invariant under any Levi compatible complex structure on $D$, this is impossible for a one-dimensional subspace.

Let us move towards the Cartan picture of our torpedo, now. Notice that the characteristic line $\operatorname{Char}D\subset D$ brings an infinite-dimensional space of infinitesimal symmetries. Indeed, for any smooth function $f$ and $X\in D$, $[fA_3,X]=f[A_3,X] - X(f)A_3\in D$. Thus, let us quotient out the non-coupled axial rotation $A_3$, i.e., consider the quotient by the characteristic foliation
$$
p:Q=SE(3)\to \bar Q=SE(3)/SO(2)
.$$
We may identify $SE(3)/SO(2) \simeq \mathbb R^3\times S^2$, with $p(R,x) = (x, n)$, where $n=Re_3$. This corresponds to claiming that we do not see the rotations of the torpedo about its axis.

The original distribution $D$ descends to rank 3 distribution
$$
\bar D = D/\operatorname{Char}D \subset T(\mathbb R^3\times S^2), \bar D_{(x,n)}= \bigl\{(\dot x,\dot n);\ \dot x = u n,\ \dot n \in T_n S^2\bigr\}
$$
and this distribution on $\bar Q$ naturally decomposes as 
$$
\bar D = E\oplus V,\qquad E = \operatorname{span}\{(n,0)\},\quad V=\{0\} \oplus T_nS^2
.$$
Now, the Levi bracket provides a linear isomorphism $E\otimes V\to T\bar Q/\bar D$, while $[V,V]=0$, $[E,E]=0$. We are arriving at 
\begin{theorem}
The characteristic reduction torpedo model $\bar Q$ is the oriented correspondence space $\bar Q=\mathbb R^3\times S^2$ equipped with a path geometry structure. After further $\mathbb Z_2$ reduction reversing the longitudinal axis direction, it becomes $\mathbb P(T\mathbb R^3)$ with the standard flat path geometry of type $(3,5)$.  
\end{theorem}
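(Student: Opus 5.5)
The plan is to identify $(\bar Q,\bar D=E\oplus V)$ directly with the standard model of path geometries on $\mathbb R^3$. A path geometry of type $(3,5)$ is a contact-like rank 3 distribution on a 5-manifold, split into a line field $E$ and a rank 2 field $V$. Each of $E$, $V$ is involutive, and the Levi bracket restricted to $E\otimes V$ is an isomorphism onto the quotient $T/\bar D$. The flat model is $\mathbb P(T\mathbb R^3)$: there $V$ is the vertical bundle of the projection to $\mathbb R^3$, and $E$ is the tautological lift of straight lines. So we will (1) verify these axioms on $\bar Q$, (2) recognise $\bar Q$ as the oriented version of that model, and (3) pass to the $\mathbb Z_2$ quotient.

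\emph{Step 1: the axioms on $\bar Q$.} On $\bar Q=\mathbb R^3\times S^2$ we use the explicit description of $\bar D$ given above. The field $V=\{0\}\oplus TS^2$ is the vertical bundle of the projection $(x,n)\mapsto x$, hence integrable; its leaves are the spheres $\{x\}\times S^2$. The field $E$ is spanned by the vector field $\xi_{(x,n)}=(n,0)$, so it is a line field and trivially involutive.

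Take a local vertical field $\eta=(0,\dot n(n))$. Then $[\xi,\eta]=-(\dot n,0)$ modulo $\bar D$. Since $\dot n$ ranges over $T_nS^2=n^\perp$, the map $E\otimes V\to T\bar Q/\bar D\simeq \mathbb R^3/\mathbb R n$ is a linear isomorphism, and both sides have dimension $2$. One can cross-check this against \eqref{symbolbrackets}: $p$ maps $T_3$ to $\xi$ and $A_1,A_2$ to vertical fields, and the brackets $\mathcal L(A_1,T_3)=-T_2$, $\mathcal L(A_2,T_3)=T_1$ descend because $A_3$ is characteristic. Together these give the bracket conditions stated before the theorem, and $\bar D+[\bar D,\bar D]=T\bar Q$.

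\emph{Step 2: identification with the oriented flat model.} The integral curves of $\xi$ are $t\mapsto(x+tn,n)$, i.e.\ oriented straight lines in $\mathbb R^3$ lifted to their unit tangent direction. Thus $\bar Q$ is the unit sphere bundle $S(T\mathbb R^3)$, the oriented correspondence space. Here $V$ is the vertical distribution of the fibration over $\mathbb R^3$, and $E$ is the canonical (geodesic spray) line field whose projected integral curves are oriented lines. This is precisely the oriented double cover of the flat path geometry of lines in $\mathbb R^3$, which proves the first claim.

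\emph{Step 3: the $\mathbb Z_2$ reduction.} Consider the involution $\iota(x,n)=(x,-n)$, which reverses the longitudinal axis. It preserves $V$, and it maps $\xi$ to $-\xi$, so it preserves the line field $E$. It acts freely, and the quotient is $\mathbb R^3\times\mathbb RP^2=\mathbb P(T\mathbb R^3)$, which carries the descended pair $(E,V)$. This descended pair is by definition the standard flat path geometry, whose paths are the straight lines.

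The main obstacle is only a matter of conventions rather than computation. One must make sure the notion of path geometry used (following \cite{DeZanet}, or the standard $(2n-1)$-dimensional definition) requires exactly the conditions checked in Step 1, including the integrability of $V$. One must also check that flatness follows from the identification with the homogeneous model, rather than from computing curvature. If needed, flatness can be confirmed by noting that the $15$-dimensional group $PGL(4,\mathbb R)$, acting projectively on lines in $\mathbb R^3$, acts locally as symmetries.
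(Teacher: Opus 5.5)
Your proposal is correct and follows essentially the same route as the paper: check that $\bar D=E\oplus V$ satisfies the path-geometry bracket conditions, then observe that $V$ is the vertical bundle over $\mathbb R^3$ while the integral curves of $E$ are lifts of oriented straight lines, which identifies the structure with (the double cover of) the flat model. The differences are cosmetic. The paper does the bracket check in the affine chart $n\sim(p,q,1)$, with $X=\partial_{x^3}+p\partial_{x^1}+q\partial_{x^2}$, $[V_1,X]=\partial_{x^1}$ and $[V_2,X]=\partial_{x^2}$. You instead compute $[\xi,\eta]$ coordinate-free and make explicit the free involution $(x,n)\mapsto(x,-n)$ and the fact that it preserves $E$ and $V$.
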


\begin{proof}
The above described natural splitting $\bar D=E\oplus V$ defines the path geometry on $\bar Q$, see \cite[Section 4.4.3]{parabook}. So it remains to prove that this geometry is double cover of the flat path geometry on $\mathbb P(T\mathbb R^3)$.

Consider (oriented) directions $n$ with non-zero third coordinate, normalize them as $n\sim (p,q,1)$, and consider the local coordinates 
$(x^1,x^2,x^3,p,q)$ on $\mathbb R^3\times S^2$. Further consider the forms $\theta^1 = dx^1 - p dx^3$, $\theta^2 = dx^2 - qdx^3$. Then $\bar D = \operatorname{ker}\{\theta^1, \theta^2\}$ and $\bar D$ is spanned by 
$$
V_1 = \partial_p,\quad V_2 = \partial_q,\quad X=\partial_{x^3} + p\partial_{x^1} + q \partial_{x^2}
.$$
Thus, $V = \operatorname{span}\{V_1,V_2\}$, $E = \mathbb R X$, $[V_1,X] = \partial_{x^1}$, $[V_2,X]=\partial_{x^2}$.
Since the integral curves of $X$ project to  (oriented) straight lines in $\mathbb R^3$, 
we arrive at the flat path geometry of projective lines in $\mathbb R^3$ as proclaimed.
\end{proof}

\begin{remark}
In view of our reduction, the original torpedo model is the $SO(2)$ principal bundle $p:SE(3)\to \bar Q = \mathbb R^3\times S^2$. 
In order to view it as a finite type Cartan geometry we need to restrict the infinite dimensional symmetries related to the uncoupled rotations $A_3$. A natural way to do this is to look for a natural connection on this bundle as follows: 

As before, we understand $\omega=(\omega_1,\omega_2,
\omega_3)$ as the $SO(3)$ part of the Maurer Cartan form on $SE(3)$, i.e., $\omega=R^{-1}dR$. 
Now, we may view the form $\nu=\omega_3$ as a reasonable choice of a principal $SO(2)$ connection associated to the reductive splitting $\mathfrak{so}(3)=\operatorname{span}\{A_3\}\oplus \operatorname{span}\{A_1,A_2\}$, and its curvature is $d\nu=-\omega_1\wedge\omega_2$. Further, $D\cap \operatorname{ker}\nu = \operatorname{span}\{A_1,A_2,T_3\}$ is clearly the horizontal lift of the path geometry distribution $\bar D$.

The flat path geometry is given by the partial flag variety $SL(4,\mathbb R)/P_{12}$ with the parabolic subgroup $P_{12}$ corresponding to indicences of lines in planes in $\mathbb R^3$. The $\mathbb Z_2$ quotient yields the $SO(2)\times \mathbb Z_2$ principal fiber bundle $\pi:Q\to \mathbb P(T\mathbb R^3)$. 
Packing all this reduced geometry data into one Cartan connection, we arrive at the following theorem.
\end{remark}

\begin{theorem}
The 3D torpedo geometry including the above choice of  the $SO(2)$ connection $\nu$ is an abelian extension of the locally flat path parabolic geometry of type $(SL(4,\mathbb R),P_{12})$. Its Cartan connection on the bundle $\pi^*SL(4,\mathbb R)\times SO(2)\to Q$ is $\alpha = (\pi^*\beta, \nu)$, where $\beta$ is the flat path geometry connection on $\mathbb P(T\mathbb R^3)$. The curvature $\Omega$ of $\alpha$ is $\Omega=(0, -\omega_1\wedge\omega_2)$ and it is constant. This curvature describes the roll holonomy around the torpedo body axis.

The symmetry algebra of this geometric model is the sum of the algebra of conformal Euclidean motions $\mathfrak{cso}(3) \rtimes \mathbb{R}^3$ and $\mathfrak{so}(2)$. Thus 8-dimensional. 
\end{theorem}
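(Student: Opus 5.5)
We verify the theorem in three stages: build the Cartan connection, compute its curvature, and compute the symmetry algebra.

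\textbf{Construction of $\alpha$.} By the previous theorem, $\bar Q$, after the $\mathbb Z_2$ reduction, is locally $\mathbb P(T\mathbb R^3)$ with the flat $(3,5)$ path geometry. Its normal Cartan connection is the Maurer--Cartan form of $SL(4,\mathbb R)$ on the bundle $SL(4,\mathbb R)\to SL(4,\mathbb R)/P_{12}$; call it $\beta$. Pulling back along $\pi:Q\to\mathbb P(T\mathbb R^3)$ gives a $P_{12}$-bundle $\pi^*SL(4,\mathbb R)$ over $Q$. We form the product with the trivial $SO(2)$-bundle $Q\times SO(2)$, using the principal connection $\nu=\omega_3$.

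The form $\alpha=(\pi^*\beta,\nu)$ takes values in $\mathfrak{sl}(4,\mathbb R)\oplus\mathfrak{so}(2)$, which has dimension $15+1$. The structure group is $P_{12}\times SO(2)$, of dimension $10+1$, and the base has dimension $6$. We check the three Cartan connection axioms:
\begin{itemize}
\item \emph{Absolute parallelism:} the dimensions match, since $16=11+\dim Q$. Injectivity holds because $\ker d\pi=\mathbb R A_3$ on $Q$, and $\nu(A_3)=1$ detects exactly that vertical direction.
\item \emph{Equivariance} and \emph{reproduction of fundamental fields}: both follow componentwise, because each factor has these properties.
\end{itemize}
Thus $\alpha$ is an abelian extension of the flat path parabolic geometry. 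Moreover, $D\cap\ker\nu$ is the horizontal lift of $\bar D$, and the remaining direction of $D$ is $A_3=\ker d\pi$. Hence $D$ is recovered as the preimage under $\alpha$ of $\mathfrak g_{-1}\oplus\mathfrak{so}(2)$, so the filtration is encoded correctly.

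\textbf{Curvature.} The curvature is $\Omega=d\alpha+\tfrac12[\alpha,\alpha]$, computed componentwise. The $\mathfrak{sl}(4)$ part vanishes, since $\beta$ is flat and pullbacks commute with $d$. The $\mathfrak{so}(2)$ part is $d\nu$, because $\mathfrak{so}(2)$ is abelian. The Maurer--Cartan equation $d\omega=-\tfrac12[\omega,\omega]$ on $SO(3)$, with $[A_1,A_2]=A_3$, gives $d\omega_3=-\omega_1\wedge\omega_2$. Therefore $\Omega=(0,-\omega_1\wedge\omega_2)$.

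This curvature is constant in the sense that, evaluated on the left-invariant frame, it is a fixed element of $\Lambda^2\mathfrak{g}_{-1}^*\otimes\mathfrak{so}(2)$. Its roll-holonomy meaning comes from Stokes: transporting around a loop in $S^2$ traced by $A_1,A_2$ produces an axial rotation equal to the enclosed area.

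\textbf{Symmetries.} This is the main obstacle. Infinitesimal automorphisms of $(Q,\alpha)$ are vector fields $\xi$ on $Q$ that preserve $D$ and $\nu$ up to the structure group action. Any such $\xi$ preserves the characteristic line $\mathbb R A_3$, so it projects to a symmetry $\bar\xi$ of the path geometry on $\bar Q$.

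\begin{enumerate}
\item \emph{Constraint downstairs.} A priori $\bar\xi$ lies in $\mathfrak{sl}(4,\mathbb R)$, which is $15$-dimensional and acts by projective transformations of $\mathbb R^3$. We additionally require that the lift preserve $\nu$, i.e.\ that $\mathcal L_{\tilde\xi}\nu=0$ for the lift $\tilde\xi$ normalized by $\nu(\tilde\xi)$. This forces $\bar\xi$ to preserve the curvature two-form $\omega_1\wedge\omega_2$, which is the area form on $S^2$ pulled back to $Q$.
\item \emph{Surviving transformations.} I expect this to cut $\mathfrak{sl}(4)$ down to the projective maps preserving the plane at infinity, i.e.\ the affine maps, and then further to those acting on directions $n\in S^2$ by isometries. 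Those are the maps whose linear part lies in $\mathbb R\cdot SO(3)$. The surviving algebra is $\mathfrak{cso}(3)\ltimes\mathbb R^3$, of dimension $1+3+3=7$.
\item \emph{Vertical symmetries.} To this we add the vertical field generated by $A_3$ acting through $SO(2)$; it preserves $\nu$ because the group is abelian. This contributes $\mathfrak{so}(2)$.
\end{enumerate}

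The plan for the hardest part, step 1, is to write $\tilde\xi=h(\bar\xi)+fA_3$ and solve $d f=-\iota_{h(\bar\xi)}(\omega_1\wedge\omega_2)$ up to closed terms. Solvability requires $\mathcal L_{\bar\xi}$ of the area form on the $S^2$ factor to vanish. Then one checks directly, in the coordinates $(x,p,q)$, which projective vector fields on $\mathbb R^3$ induce area-preserving flows on the sphere of directions; the answer should be rotations plus dilations and translations. Adding the $\mathfrak{so}(2)$ yields the $8$-dimensional algebra.
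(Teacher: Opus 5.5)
Your overall strategy is the paper's: a componentwise Cartan connection, curvature $d\nu$, symmetries as projective symmetries of the flat path geometry whose lifts preserve $\nu$, plus the constant roll. However, two steps do not hold as written.

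The first is the construction. Since $\dim Q=6$, we have $11+\dim Q=17\neq 16$. With the bundle you describe ($\pi^*SL(4,\mathbb R)$ times a trivial $SO(2)$-bundle over $Q$, structure group $P_{12}\times SO(2)$), the total space is $17$-dimensional. Because $\nu$ is a form on $Q$, $\alpha=(\pi^*\beta,\nu)$ annihilates the extra $SO(2)$-direction. So $\alpha$ is neither injective nor does it reproduce that fundamental field, and ``componentwise'' does not rescue this. The $SO(2)$-bundle has to be $Q\to\bar Q$ itself, with $\nu$ as its principal connection. The total space is then the fibre product over $\bar Q$ of the flat path-geometry bundle with $Q$, i.e.\ $\pi^*SL(4,\mathbb R)$, of dimension $16$. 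It is a $P_{12}\times SO(2)$-bundle over the $5$-dimensional $\bar Q$, or equivalently a $P_{12}$-bundle over $Q$ modelled on $(SL(4,\mathbb R)\times SO(2))/P_{12}$. On that space your injectivity argument ($\ker d\pi=\mathbb R A_3$, $\nu(A_3)=1$) is exactly right, $\nu$ reproduces the generator $A_3$ of the right $SO(2)$-action, and your curvature computation goes through.

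The more serious gap is that the decisive step of the symmetry count is only ``expected''. The paper closes it with a fibrewise Jacobian. A projective map with derivative $A_x$ acts on oriented directions by $n\mapsto A_xn/|A_xn|$, with area Jacobian $\det A_x/|A_xn|^3$. Requiring this to be $1$ for all $n$ forces $|A_xn|$ to be independent of $n$, so $A_x=\lambda(x)B(x)$ with $B(x)\in SO(3)$. A map that is both projective and conformal is then a similarity.

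Your plane-at-infinity route also works, and avoids the conformal-plus-projective rigidity, but you must supply its reason. The descended curvature is $\bar\Omega=\mathrm{pr}_{S^2}^*\sigma$, whose kernel is the $x$-directions. Hence $\mathcal L_{\bar\xi}\bar\Omega=d\,\iota_{\bar\xi}\bar\Omega=0$ forces the $n$-component of $\bar\xi$ to be independent of $x$. For $\xi=a+Bx+(c\cdot x)x$ that component is $Bn-(n\cdot Bn)n+(c\cdot n)\bigl(x-(n\cdot x)n\bigr)$, so $c=0$ and the field is affine. Divergence-freeness on $S^2$, i.e.\ $\operatorname{tr}B-3\,n\cdot Bn=0$ for all unit $n$, then gives $B\in\mathbb R\,\mathrm{id}\oplus\mathfrak{so}(3)$, hence exactly $\mathfrak{cso}(3)\ltimes\mathbb R^3$.

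Two small points on the lift. The equation for the vertical part is $df=+\iota_{h(\bar\xi)}(\omega_1\wedge\omega_2)$, since $d\nu=-\omega_1\wedge\omega_2$. It is $H^1(\mathbb R^3\times S^2)=0$ that makes the closed right-hand side exact, with $f$ unique up to the constant that produces the $\mathfrak{so}(2)$.
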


\begin{proof}
The above construction and consideration can be summarized exactly as done in the theorem. So we have got the Cartan connection $\alpha$. Now, we have to compute the symmetry algebra. 

Let us first look at simple component, the $SO(2)$ fibers. Here, a change of gauge by right multiplication by $\phi(x,n)$ changes the connection as $\theta \mapsto \theta + d\phi$ and so any symmetry of the original torpedo survives only if $d\phi$ vanishes. Thus the infinite-dimensional gauge freedom reduces to one copy of $\mathfrak{so}(2)$. 

Next, the curvature $\omega_1\wedge\omega_2$ of our Cartan connection $\alpha$ descends to a 2-form $\Omega$ on the basis $\bar Q=\mathbb R^3\times S^2$, and its restriction to the vertical planes $V\subset T(\bar Q)$ define the standard spherical area form. Moreover, $E$ is the kernel of $\Omega_{|\bar D}$. Thus, our choice of the distribution $D\subset TQ$ and $\nu=\omega_3$ determines on $\bar Q$ the flat path geometry and the spherical area form on the direction fibers. Both have to be preserved by any symmetry and, consequently, the symmetry algebra is strongly reduced. 

Locally, the automorphisms of the flat path geometry arise from projective transformations. Writing $A_x$ for the derivative of such a transformation, its action on an oriented direction $n\in V$ is $n\mapsto (1/|A_x n|)A_x n$. Preservation of the area form requests its Jacobian $J(n) = \operatorname{det} A_x/|A_x n|^3$ equal to one. Thus, $|A_x n|$ must be independent of $n$ and so $A_x = \lambda(x)B(x)$, where $B(x)\in SO(3)$. So the transformation must be simultaneously conformal and projective, thus it will be a conformal affine transformation.

Lifting everything up, back to $Q$, we obtain the following symmetries: \\ spatial translations 
$$
(x,R)\mapsto (x+a,R),\quad a\in\mathbb R^3,
$$
spatial dilation
$$
(x,R)\mapsto (\lambda x,R),\quad \lambda\in \mathbb R, \ \lambda>0 
,$$
spatial rotation 
$$
(x,R) = (Ax,AR),\quad A\in SO(3)
,$$
and the constant body roll
$$
(x,R)\mapsto(x,R\cdot\phi), \quad \phi\in SO(2)
.$$
Notice there are two different rotations. The left multiplication by elements in $SO(3)\subset SE(3)$, and the right multiplication by an element in $SO(2)$ which is an internal body-roll symmetry.
\end{proof}

\begin{remark}
Actually, much of the information from the latter theorem can be seen directly from the Maurer-Cartan equations on $SE(3)$. Let us write the center of the mass body translational forms $\eta_i$ in coordinates:
$$
R^{-1}dx = \eta_1 e_1 + \eta_2 e_2 + \eta_3 e_3
$$
so that $D= \operatorname{ker}\eta_1\cap \operatorname{\eta_2}$ and the structure equations include
\begin{align}
d\eta_1 &= -\omega_2\wedge \eta_3 + \omega_3\wedge \eta_2
\\
d\eta_2 &= \omega_1\wedge\eta_3 - \omega_3\wedge\eta_1
\\
d\omega_3 &= -\omega_1\wedge\omega_2
\end{align}
The first two equations, modulo $\eta_1$ and $\eta_2$, describe the totally isotropic Levi bracket pencil, while the third one is invisible in the symbol (since $[A_1,A_2]=A_3\in D$).

With a little effort we may check that the chosen principal $SO(2)$ connection leaves the biggest symmetry for the model, once we request all the $SE(3)$ motions to remain among the symmetries. Of course, locally we might choose a flat $SO(2)$-connection there, which would keep the entire 15-dimensional symmetry algebra of the flat path geometry there. But there is no globally defined flat connection available. All other connections built of the forms $\omega_i$ restrict the symmetries at least as much as our choice $\nu=\omega_3$.  
\end{remark}

\section{A hyperbolic model torpedo}

Clearly, the reason for the special behaviour of the standard 3D torpedo model was caused by the uncoupled rotation about the body axis. Let us imagine we would add two orthogonal fin constraints placed at the torpedo body at the distance $\ell$ from its center, let us say at the head and the tail of it. 

If the center reference point is $x$ and the body axis is $e_3$ as before, then the head and tail of the torpedo are $x_+ = x + \ell e_3$, $x_-=x-\ell e_3$. Taking into account the rotation velocity $\omega$, the velocity of the head and tail points would be 
$$
v+\omega\times (\pm \ell e_3),
$$
where $v$ is the velocity of the reference point $x$. Now, let us impose the restriction, that the tail of the torpedo is not allowed to move in the $e_1$ direction (by mounting the appropriate fin there), while the head is not allowed to move in the $e_2$ direction. This leads to the following conditions $e_1\cdot (v + \omega\times (- \ell e_3))=0$, $e_2\cdot (v + \omega\times (\ell e_3))=0$, implying
\begin{equation}
v_1 = \ell \omega_2, \qquad v_2 = \ell \omega_1
.\end{equation}
The third velocity component $v_3=u$ remains free. If $\ell=0$, we obtain the previous model, but with $\ell > 0$ and the standard controls $(u,\omega_1, \omega_2, \omega_3)$ we arrive at the more fancy generators of our control distribution $D_\ell$:
\begin{equation}
D_\ell = \operatorname{span}\{X_1=A_1+\ell T_2, X_2 = A_2 + \ell T_1, X_3 = A_3, X_4 = T_3\}
.\end{equation}
Again, we formulated our constraints as fixed to the body frame, and so the control distribution is left invariant on $SE(3)$. The brackets \eqref{SEbrackets} lead, modulo $D_\ell$, to the symbol brackets
\begin{equation}
 [X_1,X_3] = 2\ell\bar T_1,\quad [X_2,X_3]=-2\ell \bar T_2,\quad [X_1,X_4]=-\bar T_2,\quad [X_2,X_4]=\bar T_1   
,\end{equation}
while $[X_1,X_2]\in D_\ell$ and $[X_3,X_4]=0$.

Now, we can repeat the discussion of the type of the symbol in the De Zanet's classification. In the dual basis $e^1,e^2,e^3,e^4$ of $D_\ell^*$, the image $W$ of $\mathcal L^*$ is now spanned by the forms 
$$
\mu_1 = 2\ell e^1\wedge e^3 + e^2\wedge e^4,\qquad \mu_2 = -e^1\wedge e^4 - 2\ell e^2\wedge e^3.
$$ 
The Pfaffian of their pencil $a\mu_1 +b\mu_2$ is $2\ell(b^2-a^2)$ (up to sign conventions) and thus we get two distinct roots $a=b$ and $a=-b$, always one positive and one negative. Thus, we have proved: 

\begin{theorem}
The modified 3D torpedo model with the two orthogonal fin constraints at its head and tail provides a hyperbolic $(4,6)$ type geometry.
\end{theorem}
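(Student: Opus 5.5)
The proof applies De Zanet's classification, in the formulation recalled in the previous section. The type of a $(4,6)$ symbol is determined by the signature of the conformal quadratic form $b$ on $\Lambda^2 D_\ell^*\simeq\mathbb R^{3,3}$, given by the wedge product, restricted to the 2-plane $W=\operatorname{Im}\mathcal L^*$. We already know the hyperbolic orbit is the one with signature $(1,1)$. So it suffices to show that $b_{|W}$ is nondegenerate and indefinite for every $\ell>0$.

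First I will verify the symbol brackets from \eqref{SEbrackets}, working modulo $D_\ell$. For example, $[X_1,X_3]=[A_1,A_3]+\ell[T_2,A_3]$. Both terms lie in $\operatorname{span}\{A_2,T_1\}$, and $A_2\equiv -\ell T_1$ modulo $D_\ell$, so the bracket is a nonzero multiple of $\ell\bar T_1$. Similarly, $[X_1,X_2]=A_3+\ell^2[T_2,T_1]+\ell([A_1,T_1]+[T_2,A_2])$ reduces to $A_3+\ell([A_1,T_1]-[A_2,T_2])=A_3\in D_\ell$, since the $T_3$ contributions cancel. I will also check that $\bar T_1,\bar T_2$ span $\mathfrak{se}(3)/D_\ell$, which confirms that the growth vector is again $(4,6)$.

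With the brackets in hand, I will write $\mathcal L^*(\bar T_1^*)$ and $\mathcal L^*(\bar T_2^*)$ in the basis $e^i\wedge e^j$. This gives the forms $\mu_1,\mu_2$ up to the precise signs and coefficients. Under the identification, $b(\alpha,\alpha)$ is the coefficient of $e^1\wedge e^2\wedge e^3\wedge e^4$ in $\alpha\wedge\alpha$, i.e. twice the Pfaffian.

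For $\alpha=a\mu_1+b\mu_2$, the only products that can contribute are $e^1\wedge e^3\wedge e^2\wedge e^4$ and $e^1\wedge e^4\wedge e^2\wedge e^3$. The squares $\mu_i\wedge\mu_i$ contribute terms proportional to $\ell$ (from $2\ell e^1\wedge e^3\cdot e^2\wedge e^4$), with opposite signs for $\mu_1$ and $\mu_2$. The cross term $\mu_1\wedge\mu_2$ should vanish, because each of its products repeats an index. This gives the Gram matrix $\operatorname{diag}(c\ell,-c\ell)$ for some nonzero constant $c$, i.e. the quadratic form $c\ell(a^2-b^2)$. Its two distinct real null lines $a=\pm b$ show that the form is nondegenerate of signature $(1,1)$ for $\ell\neq 0$, and so the symbol lies in the hyperbolic open orbit. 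As a sanity check, setting $\ell=0$ recovers the totally isotropic case of the original torpedo.

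The main obstacle is the careful sign bookkeeping in the symbol brackets, and in particular making sure the cross term genuinely vanishes, because a nonzero cross term could alter the signature. Even so, the conclusion is robust. The determinant of the Gram matrix is $-c^2\ell^2-(\text{cross})^2<0$, which forces indefiniteness regardless of the off-diagonal entry. So I will present the signature argument through this negative determinant, which avoids relying on exact cancellations.
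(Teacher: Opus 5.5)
Your proposal is correct and is essentially the paper's argument: your Gram matrix $\operatorname{diag}(-4\ell,4\ell)$ of the wedge pairing on $W=\operatorname{span}\{\mu_1,\mu_2\}$, with vanishing cross term, is just twice the paper's Pfaffian $2\ell(b^2-a^2)$ of the pencil $a\mu_1+b\mu_2$, and its two real null lines $a=\pm b$ give the hyperbolic signature $(1,1)$. One small slip: in $[X_1,X_2]$ there are no $T_3$ terms to cancel, since $[A_1,T_1]=0$ and $[A_2,T_2]=0$ individually, but your conclusion $[X_1,X_2]=A_3\in D_\ell$ still holds.
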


\begin{remark}
A natural question is, how to deform our control distribution $D$ of the isotropic model above to get the other open orbit type in the De Zanet's classification. The answer is suprisingly simple: no way!

Indeed, let us consider a general coupling of the rotations $\omega$ and center body velocities $v$. We keep $v_3=u$ to be free, while we allow 
$$
v_1 = \alpha\omega_1+\beta\omega_2, \qquad v_2 = \gamma\omega_1 +\delta\omega_2
.$$
Then a direct computation with the two forms $\mu_1$ and $\mu_2$ that span the image of $\mathcal L^*$ as above, yields the Pfaffian of their pencil in the form
$$
(\beta+\gamma)(a^2-b^2) + 2(\alpha-\delta)ab
.$$
Its discriminant is $4((\alpha-\delta)^2 + (\beta+\gamma)^2)$ and thus, we land in an open orbit if and only if $(\alpha-\delta,\beta+\gamma)\ne (0,0)$ and the discriminant is always non-negative. This shows that we always land either in the hyperbolic orbit or in one of the non-open ones. 

If we understand the coupling of the translational and rotational velocities as a linear map
$$
C = \begin{pmatrix} \alpha & \beta \\ \gamma & \delta\end{pmatrix}
,$$
then the degenerate orbit condition is that 
$$
C = \alpha\begin{pmatrix} 1 & 0 \\ 0 & 1\end{pmatrix} + \beta\begin{pmatrix} 0 & -1 \\ 1 & 0\end{pmatrix}
.$$
Consequently, we get a hyperbolic $(4,6)$ type geometry if and only if the coupling matrix contains a nonzero symmetric trace-free part. Our simple example above comes with 
$$
C = \ell\begin{pmatrix} 0 & 1 \\ 1 & 0\end{pmatrix}
,$$
symmetric and trace-free.
\end{remark}

\section{The codimension two CR picture}

The latter theorem invites us to fully understand our coupled torpedo model in the realm of special codimension two CR structures of \cite{SchSl}. But we have to beware that the hyperbolic $(4,6)$ geometry itself is an infinite type geometry, while we need to reduce the structure group by the choice of an almost complex structure $J$ on $D_\ell$ to get a genuine CR structure of the given type there.

Fortunately, we are going to see that there is a natural choice for such a structure, once we consider a sub-Riemannian metric on $D_\ell$. 

In order to simplify our formulae, let us declare
$$
E_1=A_1+\ell T_2,\ E_2 = A_2 + \ell T_1,\ E_3 = A_3,\ E_4 = 2\ell T_3
$$
to be the orthonormal frame of $D$. Then, choosing $T_1$ and $T_2$ as the basis of the complement, the Levi form is determined by the two 2-forms
$$
\mu_1 = e^1\wedge e^3 + e^2\wedge e^4,\qquad \mu_2 = -e^1\wedge e^4 - e^2\wedge e^3
$$
up to the common factor $2\ell$.

As well known from \cite{SchSl}, the hyperbolic codimension two CR structures come with an almost product structure on the entire $TM$, compatible with the Levi form and a compatible almost complex structure on the distribution $D$, see also \cite[Section 4.3.8]{parabook} 

Let us start with the almost product structure. In the pencil of the two forms $\mu_1$ and $\mu_2$, there are two decomposable ones:
\begin{align*}
\mu_1+\mu_2 &= (e^1-e^2)\wedge(e^3-e^4)
\\
\mu_1-\mu_2 &= (e^1+e^2)\wedge(e^3+e^4) 
.\end{align*}
Their kernels are 
\begin{align*}
D_+&=\operatorname{span}\{E_1+E_2,E_3+E_4\}
\\
D_-&=\operatorname{span}\{E_1-E_2,E_3-E_4\}
.\end{align*}
This is the natural splitting, up to swapping $D_\pm$. The corresponding almost product structure $P$ can be defined as $P_{|D_+}=\operatorname{id}_{D_+}$, $P_{|D_-}=-\operatorname{id}_{D_-}$. Thus, $P^2=\operatorname{id}_D$ and $\operatorname{Tr}P=0$. Clearly, $P(E_1)=E_2$, $P(E_2)=E_1$, $P(E_3)=E_4$, $P(E_4)=E_3$.
Thus, our affinor $P$ exchanges the two transverse rotation controls and simultaneously exchanges axial roll with normalized longitudinal propulsion.

The splitting into left and right components is exactly the hyperbolic structure considered in \cite{SchSl}. In fact the two components may be exchanged by a discrete symmetry, so in general the splitting is labelled only locally, but for our left-invariant model on $SE(3)$, the body frame labels it globally.

As also known from \cite{SchSl}, this splitting must extend to the quotient $U=TSE(3)/D$ as well. Indeed, taking 
$$ 
U_+=\operatorname{span}\{\bar T_1-\bar T_2\}, \qquad U_-=\operatorname{span}\{\bar T_1+\bar T_2\}\},
$$
a direct computation reveals
$$
[D_+,D_-]\bmod D = 0 
$$
while
$$ [D_+,D_+]\bmod D \subset U_+, \qquad [D_-,D_-]\bmod D \subset U_- 
.$$
Thus, the graded tangent algebra splits as
$$ \operatorname{gr}(TM) = (D_+\oplus U_+) \oplus (D_-\oplus U_-). 
$$
Each summand is a three-dimensional Heisenberg symbol and this is the algebraic meaning of De Zanet’s statement that the hyperbolic \((4,6)\) model Lie algebra is the product of two real three-dimensional Heisenberg algebras.

While the almost product structure on the symbol algebra is intrinsic, the choice of the almost complex structure $J$ on $D$ is not. We need to fix a compatible complex structure $J$ on $D$, i.e., making $D_\pm$ into complex lines and then, following the general theory in \cite{SchSl}, there will be the canonical normal Cartan connection there, determining also the natural split of the entire $TM$ into $T_\pm M$, i.e. the natural extension of the almost product structure $P$ to the entire $TM$.

The Levi bracket restricted to each hyperbolic factor gives a nondegenerate line-valued two-form

$$ \mathcal L_\pm: \Lambda^2D^\pm\to U^\pm. 
$$
Choosing the orientations of $U^\pm$, we turn $\mathcal L_\pm$ into area forms $ \Omega_\pm\in\Lambda^2(D^\pm)^*$ on the 2-dimensional spaces, up to a choice of scale.

Next, choose positive metrics $g_\pm$ on \(D^\pm\), as we did by our choice of the orthonormal basis $E_i$. 
Now we normalize $\Omega_\pm$ to the $g_\pm$-area forms, and we define \(J_\pm\) by
$$ g_\pm(J_\pm X,Y)=\Omega_\pm(X,Y). $$
Then clearly
$$ J_\pm^2=-1, \qquad J=J_+\oplus J_-.$$

Declaring our frame $E_i$ to be orthonormal means that
the two transverse rotational controls have equal cost; axial roll has the same normalized cost; and thrust is measured in the length scale $c=2\ell$.

We orient our two orthonormal subspaces $D_\pm$ using basis
$$ X_\pm=E_1\pm E_2, \qquad Y_\pm=E_3\pm E_4. $$
and define
$$ J X_\pm=Y_\pm, \qquad J Y_\pm=-X_\pm. $$

Equivalently, directly in our \(E_i\)-frame,
$$\begin{aligned} JE_1&=E_3,&\qquad JE_3&=-E_1,\\ JE_2&=E_4,& JE_4&=-E_2. \end{aligned} $$
Obviously, $J^2=-\operatorname{id}_D$ and, moreover, $JP_H=P_HJ$. Thus, 
\(J\) preserves both hyperbolic factors, i.e. comutes with the almost product structure $P$, and each $D_\pm$ becomes a complex line.

The essential CR compatibility condition is
$$\mathcal L(JX,JY)=\mathcal L(X,Y),$$
and this is checked easily for our Levi form.

Summarizing, we see what is intrinsic and what is the additional mechanical input:
the Levi symbol bracket itself induces $D_\pm$, $U_\pm$, $P$; the control metric adds $J_\pm$; body orientation and labelling of our fin coupling provides our signs and ordering.

This completes the initial data for the hyperbolic codimension two CR structure on our torpedo model and, finally, we want to see how far is this geometry from the flat model. From \cite{SchSl} we know that the harmonic curvature decomposes into 6 torsions and 2 curvatures, all of them one-dimensional and coming in couples. Let us discuss them
one by one. 

First of all, the normalization enforces the choice of the right complement of $D$ in $TM$. Once we get this, we may compute all the torsion components easily. 

We shall use our generators$X_\pm=A_1\pm A_2 +\ell(T_2\pm T_1)$, $Y_\pm=A_3\pm 2\ell T_3$ of $D_\pm$.
Following \cite{SchSl}, the canonical three-planes $T_\pm$ must satisfy
$[D_\pm,D_\pm]\subset T_\pm$.
In our case 
$$
[X_\pm,Y_\pm]=\pm X_\mp + 4\ell(T_1 \mp T_2)
$$
and thus correct generators for the complements of $D_\pm$ in $T_\pm$ are 
$$ Z_+=T_1-T_2+\frac{1}{4\ell}X_-,\qquad  Z_-=T_1+T_2-\frac{1}{4\ell}X_+
.$$

As described in \cite{SchSl}, the three pairs of the torsion are $N_\pm$ 
expressing the Nijenhuis tensor of $J_\pm$; the so called $S$-torsions, given by the formula 
$$S_\pm (X,Y) = \pi_\pm([X,Y] + [JX,JY] - J[JX,Y]  +J[X,JY])$$ with $X\in D_+$ 
and $Y\in D_-$; and the almost-product torsion $\tau_\pm$ expressing the integrability of the almost-product structure on $TM$. 

A straightforward check reveals that the Nijenhuis tensors as well as the $S$-torsions of our torpedo model vanish. The almost product obstruction torsion $\tau$ is non-trivial (we have to compute in the symbol algebra):
$$
\tau_+(\bar Z_+,X_+)=0,\quad \tau_+(\bar Z_+,Y_+)=-\bar Z_-,\quad 
\tau_-(\bar Z_-,X_-)=0,\quad \tau_-(\bar Z_-,Y_-)=\bar Z_+.
$$

In summary, our torpedo model is an integrable CR structure whose $S$ torsion vanishes too, but it is not a product of two hypersurface type $CR$ structures of dimension 3. Indeed, recall that following \cite{SchSl}, it would be such a product if and only if all its 6 harmonic torsion components vanish.

Next, we want to compute the harmonic curvature. The two components are represented by chains
$$
W_\pm:\mathfrak g_{-2}^\pm\times\mathfrak g_{-1}^\pm\to \mathfrak{g}_1^\pm
.$$
If the model was torsion-free, then $W_{\pm}$ would be exactly the two Chern-Moser curvatures of the 3-dimensional components.

In our case, since the model is left invariant, the curvatures must be constant, thus given by their scalar value in one point.

Moreover, since our simple fin based coupling enjoys the discrete symmetry exchanging the two null Levi factors and thus the two curvatures will be the same scalars, up to sign or complex conjugation, depending on our choices of the orientations etc.

Because our configuration space $M$ is the Lie group \(SE(3)\), the normal Cartan connection can be encoded by a linear map
$$ \alpha: \mathfrak{se}(3) \to \mathfrak g = \mathfrak{su}(2,1)_+\oplus\mathfrak{su}(2,1)_-$$
whose projection to \(\mathfrak g/\mathfrak p\) is our adapted crossed-fin frame.

Its curvature is
$$ K_\alpha(X,Y) = [\alpha(X),\alpha(Y)] - \alpha([X,Y])$$
and the normality condition reads
$$ \partial^*K_\alpha=0.$$
We have to solve this equation 
successively in homogeneities \(1,2,3,4\). The calculation must produce:
the already computed pair $(\tau_+,\tau_-)$ in degree 1;
the non-harmonic higher components determined by the normalization;
the remaining harmonic components $W_\pm$ in degree 4.

Thus, there are no differential equations, we are left with finite-dimensional linear algebra, followed by a small quadratic computation. This quite tedious computation leads to the final conclusion
$$W_\pm=\mp\frac{15}2.$$

We have arrived at the final result
\begin{theorem}
The cross-fin based 3D torpedo model enjoys a natural homogeneous left-invariant hyperbolic integrable codimension two CR geometry which is not a product of two hypersurface-type CR geometries and has nontrivial constant torsion and curvature.
The symmetry algebra of this model is $\mathfrak{se}(3)$.
\end{theorem}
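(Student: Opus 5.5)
The first part of the statement collects the preceding computations, so I would assemble it rather than recompute it. Left invariance of $D_\ell$, $J$ and the sub-Riemannian frame $E_i$ makes the structure homogeneous under left translations of $SE(3)$. The theorem on the hyperbolic $(4,6)$ type, together with the compatible $J$ built from $g_\pm$ and $\Omega_\pm$, gives a hyperbolic codimension two CR structure in the sense of \cite{SchSl}. Integrability means vanishing of the Nijenhuis components $N_\pm$, which was checked above. The nonvanishing of $\tau_\pm$ excludes the product case, since by \cite{SchSl} a product requires all six harmonic torsions to vanish. The computed value $W_\pm=\mp\frac{15}{2}$ gives nontrivial constant curvature. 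Constancy of all harmonic components is automatic, because left translations are automorphisms and the normal Cartan connection is encoded by the constant linear map $\alpha$.

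The substantive part is the claim that the symmetry algebra is exactly $\mathfrak{se}(3)$. The inclusion $\mathfrak{se}(3)\subset\mathfrak{inf}$ comes from the right-invariant vector fields, which generate left translations and preserve all the left-invariant data.

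For the reverse inclusion I would use the standard parabolic bound. Infinitesimal automorphisms of a regular normal parabolic geometry on a connected manifold are determined by their $1$-jet, i.e.\ they inject into $\mathfrak{g}$. For a homogeneous model, $\mathfrak{inf}$ is the Lie algebra with underlying space
$$\mathfrak{g}_{-}\oplus\mathfrak{s}_0, \qquad \mathfrak{s}_0\subset\mathfrak p,$$
and the isotropy $\mathfrak{s}_0$ must annihilate the curvature function. I would therefore compute the isotropy algebra at the identity: the elements of $\mathfrak p=\mathfrak p_+\oplus\mathfrak p_-$ whose action kills the constant curvature $K_\alpha$. The aim is to show that this algebra is zero.

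Degree by degree, the grading component $\mathfrak g_0^\pm$ consists of complex scalings of $D_\pm$, equivalently a $\mathbb C^*\times\mathbb C^*$ acting as $\mathfrak{co}(2)\oplus\mathfrak{co}(2)$. This acts on $\tau_+\in\operatorname{Hom}(\mathfrak g_{-2}^+\otimes\mathfrak g_{-1}^+,\mathfrak g_{-2}^-)$ and on $W_\pm$ by characters.
\begin{itemize}
\item Requiring $\tau_+(\bar Z_+,Y_+)=-\bar Z_-\neq0$ and $\tau_-\neq0$ to be preserved forces the real scaling weights to satisfy two independent linear relations.
\item The nonzero $W_\pm$ contribute further weight conditions.
\item The rotation parts are killed because $\tau_+$ has a definite phase: it vanishes on $X_+$ but not on $Y_+$.
\end{itemize}
Together these should force the $\mathfrak g_0$-part of any isotropy element to vanish. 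For the positive parts $\mathfrak g_1,\mathfrak g_2$ I would use the filtration argument. An isotropy element with vanishing $\mathfrak g_0$-part has leading term in $\mathfrak p_+$. Its action on the lowest-homogeneity curvature component, namely $\tau$ in degree $1$, lands in a degree-$2$ component. By Kostant/Tanaka prolongation rigidity, or by checking directly that $\mathfrak g_1\to\operatorname{Hom}(\dots)$, $Z\mapsto Z\cdot\tau$, is injective, this forces the leading term to vanish. Hence $\mathfrak s_0=0$ and $\dim\mathfrak{inf}=\dim\mathfrak g_-=6$, which together with the inclusion gives $\mathfrak{inf}=\mathfrak{se}(3)$.

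I expect the main obstacle to be the isotropy computation for $\mathfrak g_0$. Since $\tau$ lives in a non-standard torsion slot, the weights must be tracked carefully to confirm that the stabilizer of the full curvature function is trivial rather than a residual one-dimensional $\mathfrak{so}(2)$. Such a residual $\mathfrak{so}(2)$ could plausibly come from a simultaneous complex rotation of $D_+$ and $D_-$, analogous to the body-roll symmetry of the isotropic model. I would first test exactly that candidate against $\tau$, using its explicit form above: the relation between the phase of $\tau_+$ and the $J_+$-rotation of $\mathfrak g_{-1}^+$ should break it.
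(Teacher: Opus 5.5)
Your proposal is correct. The first part and the degree-zero step coincide with the paper's proof. Explicitly, if $a,c$ are the real scaling weights of $\mathfrak g_0^\pm$ on $\mathfrak g_{-1}^\pm$, then $\tau_+$ forces $3a=2c$ and $\tau_-$ forces $3c=2a$. The phases of $\tau_\pm$ kill the two $\mathfrak u(1)$ rotations, so the $W_\pm$ are not needed.

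The only real difference is how you handle the positive part $\mathfrak g_1\oplus\mathfrak g_2$.

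\emph{Your first option} (Tanaka prolongation) is what the paper does. By regularity and transitivity, the associated graded $\mathfrak s$ of $\omega_u(\mathfrak{inf})$ is a graded subalgebra of $\mathfrak g$ with $\mathfrak s_-=\mathfrak g_-$. So $\mathfrak s_0=0$ gives $[\mathfrak s_1,\mathfrak g_{-1}]=0$, hence $\mathfrak s_1=0$ and then $\mathfrak s_2=0$, with no further computation.

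\emph{Your second option} reads off the homogeneity $2$ and $3$ parts of $Z\cdot\kappa(u)=0$. It therefore needs injectivity of $Z\mapsto Z\cdot\tau$ on $\mathfrak g_1$ \emph{and} on $\mathfrak g_2$; you only mention $\mathfrak g_1$. This injectivity does hold:
\begin{itemize}
\item The homogeneity-one part of $\kappa(u)$ is exactly $\tau$: being of lowest homogeneity it is harmonic, and $N_\pm=S_\pm=0$.
\item For $W=W_++W_-\in\mathfrak g_1$ one finds $(W\cdot\tau)(\bar Z_-,Y_-)=[W_+,\bar Z_+]$ and $(W\cdot\tau)(\bar Z_+,Y_+)=-[W_-,\bar Z_-]$.
\item Since $\operatorname{ad}\bar Z_\pm\colon\mathfrak g_1^\pm\to\mathfrak g_{-1}^\pm$ is injective, $W\cdot\tau=0$ forces $W=0$.
\item The same two evaluations, now with values in $\mathfrak g_0^\pm$, dispose of $\mathfrak g_2$.
\end{itemize}
This route costs an explicit check. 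In return it uses only $P$-equivariance of $\kappa$ and proves directly that $\operatorname{ann}_{\mathfrak p}\kappa(u)=0$, without invoking the graded-subalgebra structure or transitivity for the isotropy step.

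One small inaccuracy: infinitesimal automorphisms of such parabolic geometries are not in general determined by their $1$-jet. In the flat model, elements of $\mathfrak g_2$ have vanishing linearization at the base point. What you actually use is injectivity into $\mathfrak g$ via $\omega_u$ at one point of the Cartan bundle, and that is correct.
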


\begin{proof}
We have derived all the claims of the theorem, except the last one. 
Of course, the symmetry algebra of the left-invariant model must include the entire $\mathfrak{se}(3)$, and it clearly cannot be the full 16-dimensional algebra of the flat model.

For our cross-fin geometry, \(SE(3)\) already acts transitively by symmetries. Therefore, any symmetry beyond these six dimensions would have to contribute a nontrivial isotropy symmetry in the graded vertical subalgebra at a chosen point \(x\).

After choosing a point \(u\) in the Cartan bundle over \(x\), the isotropy is filtered, and its associated graded algebra embeds into
$$ \mathfrak g_{\ge 0} = \mathfrak g_0\oplus\mathfrak g_1\oplus\mathfrak g_2. $$

The standard Tanaka procedure then starts with the degree-zero part
$$ \mathfrak a_0 = \operatorname{ann}_{\mathfrak g_0}(K_H(x))$$
where \(K_H\) is the harmonic curvature. Since every infinitesimal symmetry preserves curvature,
$$ A\cdot K_H(x)=0 $$
for its degree-zero isotropy component \(A\).

In our case, a straightforward computation shows that $\operatorname{ann}_{\mathfrak g_0}(\tau_\pm)$ already vanishes. 
Thus, the Tanaka-prolongation algorithm cannot produce any nontrivial additional symmetry.

Notice that the two non-trivial torsion components themselves killed all additional symmetries, so we did not have to look at the curvatures at all.
\end{proof}


\end{document}